\tikzset{>=Triangle}
\pgfplotsset{compat=1.17}
\numberwithin{equation}{section}
\newtheorem{theorem}{Theorem}[section]
\newtheorem{proposition}[theorem]{Proposition}
\theoremstyle{definition}
\theoremstyle{remark}
\newcommand{\linf}{\ell^\infty}
\newcommand{\cseq}{c}
\begin{document}
	
	\title[Isometric embeddings of separable Banach spaces]%
	{Isometric embeddings of separable Banach spaces into $(\ell^\infty \setminus c)\cup\{0\}$}
	
	\author[Geivison Ribeiro]{Geivison Ribeiro}
\address{Department of Mathematics, IMECC, University of Campinas (Unicamp), 13083-970, Campinas–SP, Brazil}
\email{geivison.ribeiro@academico.ufpb.br}
\email{geivison@unicamp.br}

	\subjclass[2020]{15A0, 46B87, 46A16, 28A9, 46B04, 46B20}
	\keywords{isometric embeddings, separable Banach spaces, $\ell^\infty$, convergent sequences, lineability, spaceability}
	
	\maketitle

\begin{abstract}
	The classical Banach--Mazur theorem asserts that every separable Banach space admits an isometric embedding into $C[0,1]$. 
	It is also well known that every separable Banach space embeds isometrically into $\ell^\infty$.
	We show that such an embedding can be chosen so that its image intersects $c$ only at the origin. Moreover, we prove that any finite- or countable-dimensional, or more generally separable, subspace of $(\ell^\infty \setminus c)\cup\{0\}$ can be extended to a subspace containing an isometric copy of an arbitrary separable Banach space, while still avoiding $c$. 
	We further establish that this extension property also holds for every subspace $D\subset \ell^\infty$ with $D\cap c=\{0\}$ and separable image in the quotient $\ell^\infty/c$.
\end{abstract}

\section{Introduction}

One of the classical landmarks of Banach space theory is the Banach--Mazur theorem, which asserts that every separable Banach space is isometric to a subspace of $C[0,1]$, (see, e.g., \cite{BessagaPelczynski1975}, \cite{KleiberPervin1969}).  
This theorem tells us that, despite the apparent variety of separable Banach spaces, they can all be realized within a single, very concrete function space.  
At the same time, it emphasizes the richness of $C([0,1])$, which is large enough to accommodate all of them.  

Even more generally, it was already noted by Banach \cite{Banach1932} that every separable Banach space can be embedded isometrically into $\ell^\infty$. 
Thus $\ell^\infty$ provides a natural environment in which all Banach spaces can be represented.  
Within this environment, certain subspaces have played a central role.  
The spaces $c$ of convergent sequences and $c_0$ of sequences converging to zero are classical examples whose presence reflects structural features that are impossible to overlook.  
It is in this setting that Lindenstrauss \cite{Lindenstrauss1967} and Rosenthal \cite{Rosenthal1968} established that every separable subspace of $\ell^\infty$ is quasi-complemented, 
while Wilansky \cite{Wilansky1975} and Kalton \cite{Kalton1975} developed fundamental criteria for understanding large closed subspaces of sequence spaces.  
More broadly, Kleiber and Pervin \cite{KleiberPervin1969} later showed that metric spaces of arbitrary density can be embedded in suitable $C(K)$ spaces, extending the reach of these ideas beyond the separable setting.  

Alongside this line of work, several remarkable refinements of the Banach--Mazur theorem have brought new depth to the picture.  
Rodr\'iguez--Piazza \cite{Piazza} proved that the isometric embedding of a separable Banach space into $C([0,1])$ can be chosen so that every nonzero function in the image is nowhere differentiable.  
Hencl \cite{Hencl2000} went further, showing that one can arrange for the image to consist of functions without approximate differentiability and even without H\"older continuity.  
Earlier, Fonf, Gurariy and Kadets \cite{FonfGurariyKadets1999} had already exhibited infinite-dimensional subspaces of $C([0,1])$ made entirely of nowhere differentiable functions.  
These contributions highlight a subtle phenomenon: even when universality is guaranteed, one can impose surprisingly strong restrictions on the nature of the embedding.  

\medskip

\noindent\textbf{Our contributions.}
In this article we investigate analogous phenomena in the sequence space $\ell^\infty$.  
Our main results can be summarized as follows:
\begin{itemize}
	\item Every separable Banach space admits an isometric embedding into $\ell^\infty$ whose image intersects
	$c$ only at the origin (\autoref{thm:main}).
	\item If $D \subset \ell^\infty$ is finite-dimensional with $D \cap c = \{0\}$, then $D$ can be extended to a closed
	subspace containing an isometric copy of any separable Banach space while still avoiding $c$
	(\autoref{prop:extension}).
	\item More generally, if $D$ has a countable Hamel basis or if $D$ is separable, then analogous
	extensions are possible, although the sum $D+T(E)$ need not be closed (autoref{prop:countable-dim}
	and \autoref{prop:separable-D}).
	\item In full generality, if $D\subset\ell^\infty$ satisfies $D\cap c=\{0\}$ and has separable image in the quotient
	$\ell^\infty/c$, then the same extension property holds (\autoref{thm:general-D}).
\end{itemize}

These results bring a new perspective on embeddings into $\ell^\infty$: not only do such embeddings exist in abundance, but they can also be arranged so as to avoid $c$ entirely, even when extensions are forced to contain prescribed subspaces.  
In this way, the work presented here complements both the classical geometric results 
\cite{Kalton1975,Lindenstrauss1967,Rosenthal1968,Wilansky1975}
 and more recent developments in lineability and spaceability 
\cite{ABRR-2025,AGPS,AGS,bernaljfa,FPT,Raposo,Leo,Papathanasiou2022,PR}

\section{Main results}\label{sec:main}

In this section we present the two main contributions of this work.

\medskip 

\begin{theorem}\label{thm:main}
	Every separable Banach space $E$ is isometric to a subspace $Y \subset \ell^\infty$ such that 
	\[
	Y \cap c = \{0\}.
	\]
\end{theorem}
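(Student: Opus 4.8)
The plan is to start from the classical isometric embedding of $E$ into $\ell^\infty$ built from a countable norming set of functionals, and then to sabotage convergence of the coordinate sequences by inserting repetitions -- crucially, infinitely many copies of the zero functional -- into the list of coordinates. First I would fix a sequence $(x_k)_{k\ge1}$ dense in the unit sphere $S_E$ (possible since $E$ is separable) and, via Hahn--Banach, choose $f_k\in E^*$ with $\|f_k\|=1$ and $f_k(x_k)=1$. Setting $f_0:=0$, a one-line density estimate gives $\sup_{k\ge0}|f_k(x)|=\|x\|$ for every $x\in E$, so $(f_k)_{k\ge0}\subset B_{E^*}$ is norming.

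Next I would enumerate this family with infinite multiplicity: choose a sequence $(\phi_n)_{n\ge1}$ attaining each value $f_k$, $k\ge0$, infinitely often (e.g.\ through a bijection $\mathbb{N}\to\{0,1,2,\dots\}\times\mathbb{N}$). Define $T\colon E\to\ell^\infty$ by $Tx=(\phi_n(x))_n$. Since the values $(\phi_n(x))_n$ run through exactly $\{f_k(x):k\ge0\}$, we get $\|Tx\|_\infty=\sup_{k\ge0}|f_k(x)|=\|x\|$, so $T$ is a linear isometry and $Y:=T(E)$ is an isometric copy of $E$ in $\ell^\infty$. It then remains to check $Y\cap c=\{0\}$: if $Tx\in c$, say $\phi_n(x)\to L$, then restricting to the infinite set of indices where $\phi_n=f_0=0$ forces $L=0$, and restricting to the infinite set of indices where $\phi_n=f_k$ forces $f_k(x)=L=0$ for every $k\ge1$; hence $\|x\|=\sup_{k\ge0}|f_k(x)|=0$, i.e.\ $x=0$.

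The step I would flag is not a genuine difficulty but an easy-to-miss pitfall in the design of the embedding: the obvious first attempt -- interleaving $f_k$ with $-f_k$ -- fails, because one can have $f_k(x)\to0$ while $\sup_k|f_k(x)|=\|x\|>0$ (the supremum being attained at a single index), so the interleaved sequence converges to $0$ even though $x\ne0$. The repetition device is exactly what promotes ``$f_k(x)\to0$'' to ``$f_k(x)=0$ for all $k$'', while the additional zero coordinates force any prospective limit to be $0$ in the first place; together these two features close the argument.
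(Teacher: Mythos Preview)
Your argument is correct, but the closing remark is not: the $\pm$-interleaving you dismiss is exactly the paper's proof, and it works. The point is that the norming family built from a dense sequence $(u_k)\subset S_E$ with $\phi_k(u_k)=1$ satisfies more than $\sup_k|\phi_k(x)|=\|x\|$: for $x\neq 0$ and $v:=x/\|x\|$, density gives $u_{k_j}\to v$, and then $|\phi_{k_j}(v)-1|\le\|v-u_{k_j}\|\to0$, so $\phi_{k_j}(x)\to\|x\|$ along an entire subsequence. Hence the interleaved sequence has subsequential limits $\|x\|$ and $-\|x\|$ and cannot lie in $c$. Your scenario ``$f_k(x)\to0$ with the supremum attained once'' can occur for a \emph{generic} norming sequence in $B_{E^*}$ (e.g.\ the coordinate functionals on $c_0$), but never for one produced by this Hahn--Banach-on-a-dense-sphere construction. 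That said, your repetition-plus-zero device is a legitimate alternative and in one sense more robust: it yields $T(E)\cap c=\{0\}$ starting from \emph{any} countable norming family, without relying on the subsequence property; the paper's route is leaner (no repetition needed) but leans on that extra geometric feature of its specific $\phi_k$'s.
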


\medskip

The next result shows that the above construction can be made more flexible by incorporating additional finite-dimensional subspaces.  

\medskip 

\begin{theorem}[Finite-dimensional extension]\label{prop:extension}
	Let $D \subset \ell^\infty$ be a finite-dimensional subspace with $D \cap c = \{0\}$. 
	For any separable Banach space $E$, there exists a closed subspace $Z \subset \ell^\infty$ containing $D$ and a subspace isometric to $E$, such that $Z \cap c = \{0\}$. 
	\bigskip
	
	In other words, any finite-dimensional subspace of $(\ell^\infty \setminus c) \cup \lbrace 0 \rbrace$ can be enlarged to a closed subspace that also contains an isometric copy of an arbitrary separable Banach space, while still intersecting $c$ only at the origin.
\end{theorem}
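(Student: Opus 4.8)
The plan is to leave $D$ untouched, to place an isometric copy $W$ of $E$ on a cleverly chosen block of coordinates, and to set $Z=D+W$. Since $\dim D<\infty$ and $W$ will be a complete (hence closed) subspace of $\ell^\infty$, the sum $Z=D+W$ is automatically closed and contains both $D$ and the isometric copy $W$ of $E$; the entire difficulty is to force $Z\cap c=\{0\}$. As a guiding remark: if $d+w\in c$ with $d\in D$, $w\in W$, then applying the quotient map $q\colon\ell^\infty\to\ell^\infty/c$ gives $q(d)=-q(w)$, so once we know $W\cap c=\{0\}$ it suffices to also know $q(D)\cap q(W)=\{0\}$. We will secure this concretely by choosing the coordinate block for $W$ so that $D$ is "spread out" away from it.

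\emph{Step 1: extracting independent limit functionals from $D$.} Put $n=\dim D$. Call an infinite set $N\subseteq\mathbb N$ \emph{admissible} if every $d\in D$ converges along $N$; admissible sets exist by iterated use of Bolzano--Weierstrass on a basis of $D$, and for such $N$ the map $\lambda_N(d)=\lim_{k\in N}d(k)$ is a linear functional on $D$. The hypothesis $D\cap c=\{0\}$ means that each nonzero $d\in D$ fails to converge; since moreover every cluster value of $d$ has the form $\lambda_N(d)$ for a suitable admissible $N$ (refine any subsequence so that all of $D$ converges along it), there exist admissible $N,N'$ with $\lambda_N(d)\neq\lambda_{N'}(d)$. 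Hence the functionals $\lambda_N-\lambda_{N'}$ have trivial common kernel in the finite-dimensional space $D$, so they span $D^*$. Fixing one admissible $N_0$, the differences $\lambda_N-\lambda_{N_0}$ ($N$ admissible) therefore already span $D^*$, and we may pick admissible $N_1,\dots,N_n$ for which $\lambda_{N_1}-\lambda_{N_0},\dots,\lambda_{N_n}-\lambda_{N_0}$ is a basis of $D^*$. Two admissible sets with infinite intersection induce the same functional on $D$; since the $\lambda_{N_i}$ ($0\le i\le n$) are pairwise distinct, the sets $N_0,\dots,N_n$ have pairwise finite intersections, so after discarding finitely many points from each we may assume they are pairwise disjoint (still infinite, with unchanged $\lambda_{N_i}$). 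Write $\lambda_i:=\lambda_{N_i}$.

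\emph{Step 2: construction and verification.} By Theorem~\ref{thm:main} fix an isometric embedding $T_0\colon E\to\ell^\infty$ with $T_0(E)\cap c=\{0\}$. Choose a bijection $\sigma\colon\mathbb N\to N_0$ and let $\iota\colon\ell^\infty\to\ell^\infty$ be the linear isometry with $\iota(u)_{\sigma(j)}=u_j$ and $\iota(u)_k=0$ for $k\notin N_0$. Set $W=\iota(T_0(E))$ and $Z=D+W$; then $W$ is isometric to $E$, $D\subseteq Z$, and $Z$ is closed (finite-dimensional plus complete). Suppose now $z=d+w\in Z\cap c$, say $z\to L$, with $d\in D$ and $w=\iota(T_0x)\in W$. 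For $1\le i\le n$ the sequence $w$ vanishes on $N_i$, so $d|_{N_i}=z|_{N_i}\to L$, i.e.\ $\lambda_i(d)=L$. On $N_0$ we have $d|_{N_0}\to\lambda_0(d)$, hence $w|_{N_0}=z|_{N_0}-d|_{N_0}\to L-\lambda_0(d)$; but $w|_{N_0}$, relabelled via $\sigma$, is exactly $T_0x$, so $T_0x\in c$, whence $x=0$, $w=0$ and $L=\lambda_0(d)$. Thus $(\lambda_i-\lambda_0)(d)=0$ for all $i$, and since these functionals form a basis of $D^*$ we get $d=0$, so $z=0$. This proves $Z\cap c=\{0\}$, as required.

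\emph{Main obstacle.} Everything hinges on Step 1, namely on converting the qualitative hypothesis $D\cap c=\{0\}$ into the quantitative statement that $D$ admits $n+1$ pairwise disjoint admissible index sets whose associated limit functionals are "affinely independent" (equivalently, whose consecutive differences form a basis of $D^*$). This is what lets the blocks $N_1,\dots,N_n$ simultaneously detect every nonzero $d\in D$ while the block $N_0$ is left free to carry a full isometric copy of $E$; once this combinatorial–linear fact is in place, the remaining verifications are routine.
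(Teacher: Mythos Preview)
Your argument is correct, but it takes a genuinely different route from the paper. The paper extracts a \emph{single} admissible index set $I$ (one application of Bolzano--Weierstrass on $\mathbb{K}^r$), splits it as $I=I^+\sqcup I^-$, and builds $T$ by placing the norming functionals $+\phi_k$ along $I^+$ and $-\phi_k$ along $I^-$ (and $0$ off $I$); every $d\in D$ then has the \emph{same} limit $L(d)$ along $I^+$ and $I^-$, whereas $T(x)$ has the distinct limits $\pm\|x\|$, which immediately yields $T(E)\cap(D+c)=\{0\}$. You instead carve out $n+1$ disjoint admissible sets whose limit functionals are affinely independent, reserve $N_0$ to carry a black-box copy of $E$ via Theorem~\ref{thm:main}, and use $N_1,\dots,N_n$ purely to detect $D$. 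Your approach is more modular (it cleanly separates ``embed $E$'' from ``separate from $D$'' and reuses Theorem~\ref{thm:main}), but the price is the nontrivial Step~1, which you rightly flag as the main obstacle; the paper's route avoids that linear-algebraic step entirely at the cost of re-running the $\pm\phi_k$ construction inside the proof. One cosmetic remark: in Step~2 it is cleanest to take $\sigma$ to be the increasing enumeration of $N_0$, so that ``$w|_{N_0}$ relabelled via $\sigma$'' is literally $T_0x$ without any permutation argument.
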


\medskip 

\noindent\textbf{Remark.}  
\autoref{thm:main} is a particular case of \autoref{prop:extension}, obtained by choosing $D = \{0\}$. 
For clarity of exposition, however, we prefer to present them in this order: first the embedding result, and then its refinement through finite-dimensional extensions.

\medskip

\begin{proof}[Proof of \autoref{thm:main}]  
	Let $E$ be separable and fix a dense sequence $(u_k)_{k=1}^{\infty}\subset \mathbb{S}_E$.  
	By Hahn--Banach, for each $k$ there exists $\phi_k\in\mathbb{S}_{E^*}$ with $\phi_k(u_k)=1$.
	
	Define $T:E\to \ell^\infty$ by  
	\[
	T(x):=\big(\psi_n(x)\big)_{n=1}^\infty,
	\qquad 
	\psi_{2k-1}:=\phi_k,\ \ \psi_{2k}:=-\phi_k.
	\]

	For every $x\in E$, 
	
	\[ \|T(x)\|_\infty = \sup_{n} |\psi_n(x)| \le \|x\|. \] 
	Conversely, let $x\neq 0$ and set $v:=x/\|x\|$. Since $(u_k)$ is dense in $\mathbb{S}_E$, pick a subsequence $(u_{k_j})_{j=1}^{\infty}$ with $u_{k_j}\to v$.  
	As $\|\phi_{k_j}\|=1$,
	\[
	|\phi_{k_j}(v)-\phi_{k_j}(u_{k_j})|\le \|v-u_{k_j}\|\longrightarrow 0.
	\]
	Since $\phi_{k_j}(u_{k_j})=1$, we get $\phi_{k_j}(v)\to 1$, hence $\phi_{k_j}(x)=\|x\|\,\phi_{k_j}(v)\to \|x\|$.  
	Therefore 
	\[ 
	\|T(x)\|_\infty\ge \limsup_{j}|\phi_{k_j}(x)|=\|x\|,
	\] and thus 
	
	\[ \|T(x)\|_\infty=\|x\|.\]

	For $x\neq 0$, from the previous step $\phi_{k_j}(x)\to \|x\|$, and by construction 
	\[
	\psi_{2k_j}(x)=-\phi_{k_j}(x)\to -\|x\|.
	\]  
	Hence $T(x)$ has subsequences converging to $\|x\|$ and to $-\|x\|$, so it does not converge.  
	Therefore $T(x)\notin c$ for $x\neq 0$, i.e., $T(E)\cap c=\{0\}$.
\end{proof}

	\medskip

	\begin{proof}[Proof of \autoref{prop:extension}]
		Fix a basis $w^1,\dots,w^r$ of the finite-dimensional subspace $D\subset \ell_\infty$ and set
		\[
		z_n := (w^1_n,\dots,w^r_n)\in \mathbb{K}^r \qquad (n\in\mathbb{N}),
		\]
		where $w^i=(w_n^i)_{n=1}^{\infty}$.
		Since each $w^i$ is bounded, $(z_n)_{n=1}^{\infty}$ is bounded in $\mathbb{K}^r$. By Bolzano--Weierstrass, there exists a subsequence $(z_{n_j})_{j=1}^{\infty}$ converging to some $\alpha=(\alpha_1,\dots,\alpha_r)\in \mathbb{K}^r$. Let
		\[
		I:=\{n_j:j\in\mathbb{N}\},\qquad I^+:=\{n_{2j}:j\in\mathbb{N}\},\qquad I^-:=\{n_{2j-1}:j\in\mathbb{N}\}.
		\]
		Then for every $d=\sum_{i=1}^r a_i w^i\in D$,
		\[
		d_n =\sum_{i=1}^r a_i w_n^i \xrightarrow[n\in I]{} \langle a,\alpha\rangle := \sum_{i=1}^r a_i\alpha_i,
		\]
		hence $(d_n)_{n\in I^+}\to \langle a,\alpha\rangle$ and $(d_n)_{n\in I^-}\to \langle a,\alpha\rangle$.
		
		\smallskip
		
		Let $(u_k)_{k=1}^{\infty}\subset S_E$ be dense (since $E$ is separable). For each $k\in\mathbb{N}$, by Hahn--Banach pick $\phi_k\in S_{E^*}$ with $\phi_k(u_k)=1$.
		Fix bijections $\eta^+:\mathbb{N}\to I^+$ and $\eta^-:\mathbb{N}\to I^-$. Define functionals $(\varphi_n)_{n=1}^{\infty}\subset E^*$ by
		\[
		\varphi_n :=
		\begin{cases}
			\ \ \ \phi_{\ (\eta^+)^{-1}(n)}, & n\in I^+,\\[2pt]
			-\phi_{\ (\eta^-)^{-1}(n)}, & n\in I^-,\\[2pt]
			\ 0, & n\notin I.
		\end{cases}
		\]
		Set $T:E\to\ell_\infty$ by $T(x):=(\varphi_n(x))_{n=1}^{\infty}$. Then $T$ is linear and $\|T(x)\|_\infty\le \|x\|$ for all $x$, since $\|\varphi_n\|\le 1$.
		
		\emph{Step 1. $T$ is an isometry.}
		Let $x\neq 0$ and put $v:=x/\|x\|\in S_E$. Choose a subsequence $(u_{k_j})_{j=1}^{\infty}$ with $u_{k_j}\to v$.
		As $\|\phi_{k_j}\|=1$, we have
		\[
		|\phi_{k_j}(v)-\phi_{k_j}(u_{k_j})|
		\le \|v-u_{k_j}\|\longrightarrow 0.
		\]
		Since $\phi_{k_j}(u_{k_j})=1$, it follows that $\phi_{k_j}(v)\to 1$, hence
		\[
		\phi_{k_j}(x)=\|x\|\phi_{k_j}(v)\longrightarrow \|x\|.
		\]
		Let $(n_j)_{j=1}^{\infty}$ with $n_j:=\eta^+(k_j)\in I^+$. Then $\varphi_{n_j}=\phi_{k_j}$, so (passing to an increasing subsequence if needed) $\varphi_{n_j}(x)\to\|x\|$, which yields
		\[
		\|T(x)\|_\infty \ \ge\ \limsup_{j\to\infty} |\varphi_{n_j}(x)|\ =\ \|x\|.
		\]
		Together with the reverse inequality, we conclude $\|T(x)\|_\infty=\|x\|$ for all $x$. Thus $T$ is an isometric embedding.
		
		\smallskip
		
		\emph{Avoiding $D+c$.}
		We show $T(E)\cap(D+c)=\{0\}$. Suppose $x\neq 0$ and $T(x)=d+y$ with $d\in D$ and $y\in c$.
		Write $d=\sum_{i=1}^r a_i w^i$ and let $L:=\langle a,\alpha\rangle$. As above, $(d_{n_j})_{j=1}^{\infty}\to L$ along both $I^+$ and $I^-$. On the other hand, defining $(p_j)_{j=1}^{\infty}$ with $p_j:=\eta^-(k_j)\in I^-$ we have $\varphi_{p_j}=-\phi_{k_j}$, hence
		\[
		\varphi_{n_j}(x)-d_{n_j}\ \longrightarrow\ \|x\|-L,\qquad 
		\varphi_{p_j}(x)-d_{p_j}\ \longrightarrow\ -\|x\|-L.
		\]
		Since $\|x\|>0$, these limits are distinct; thus $(T(x)-d)_n$ has two different subsequential limits (along $I^+$ and $I^-$), so it cannot converge. Therefore $y=T(x)-d\notin c$, a contradiction. Hence $T(x)\notin D+c$ for all $x\neq 0$, proving $T(E)\cap(D+c)=\{0\}$.
		
		\smallskip
		
		\emph{Conclusion.}
		Let $F:=T(E)$ and $G:=D+F$. Since $T$ is an isometry and $E$ is complete, $F$ is closed in $\ell_\infty$. The sum of a finite-dimensional subspace with a closed subspace is closed, hence $G$ is closed. If $z\in G\cap c$, write $z=d+T(x)$ with $d\in D$, $x\in E$. Then $T(x)=z-d\in c+D$, so $T(x)\in T(E)\cap(D+c)=\{0\}$, which gives $x=0$ and $z=d\in D\cap c=\{0\}$. Therefore $G\cap c=\{0\}$, $D\subset G$, and $G$ contains the isometric copy $F$ of $E$, as required.
	\end{proof}
	
			\medskip
		The arguments given so far rely on the existence of a subsequence $(n_j)$ along which 
		every element of $D$ has a limit. In the finite-dimensional or separable case, 
		this was ensured by a diagonal argument applied to a countable dense set. 
		However, for a general subspace $D\subset \ell^\infty$ with $D\cap c=\{0\}$, 
		such a construction cannot be carried out unless one imposes additional hypotheses. 
		A natural and sufficient condition is that $D$ be \emph{separable modulo $c$}, 
		that is, that the image $q(D)$ of $D$ under the quotient map $q:\ell^\infty\to\ell^\infty/c$ 
		be separable. This assumption exactly captures what is needed in Step~1 of the proof, 
		namely, the existence of a countable dense family of representatives in $D$ 
		modulo $c$ to which the diagonal procedure can be applied.
		\medskip
		
		We now state the general extension result under this hypothesis.
		
		\begin{theorem}[Extension with $D$ transversal to $c$ and separable modulo $c$]
			Let $D\subset \ell^\infty$ satisfy $D\cap c=\{0\}$, and let $q:\ell^\infty\to \ell^\infty/c$ be the quotient map.
			Assume $q(D)$ is separable in $\ell^\infty/c$ (i.e., $D$ is separable modulo $c$).
			Then for every separable Banach space $E$ there exists an isometry $T:E\to\ell^\infty$ such that
			\[
			T(E)\cap (D+c)=\{0\}.
			\]
			In particular, for $Z:=D+T(E)$ we have $Z\cap c=\{0\}$. 
		\end{theorem}
		
		\begin{proof}
			
			Since $q(D)$ is separable, choose a dense sequence $(y^j)_{j=1}^\infty\subset q(D)$.
			Pick representatives $d^j\in D$ with $q(d^j)=y^j$.
			By a diagonal argument, there is a strictly increasing sequence $(n_k)_{k=1}^\infty$ such that
			$(d^j_{\,n_k})_k$ converges in $\Bbb K$ for every $j$.
			Define $I:=\{n_k:k\in\Bbb N\}$ and split $I$ into two infinite subsequences
			\[
			I^+:=\{n_{2k}:k\in\Bbb N\},\qquad I^-:=\{n_{2k-1}:k\in\Bbb N\}.
			\]
			Set $L(d^j):=\lim_{k\to\infty} d^j_{\,n_k}$.

			For $d\in D$ and $\eta>0$, choose $j$ with $\|q(d)-q(d^j)\|_{\ell^\infty/c}<\eta/3$. 
			Then there exists $u\in c$ such that
			\[
			\|(d-d^j)-u\|_\infty<\eta/3,
			\]
			that is,
			\[
			|d_n-d^j_n-u_n|<\eta/3 \qquad \text{for all }n\in\mathbb N.
			\]
			Fix $p,q\in\mathbb N$. We insert and subtract $d^j$ and $u$ at both ends:
			\[
			\begin{aligned}
				d_{n_p}-d_{n_q}
				&= (d_{n_p}-d^j_{n_p}-u_{n_p}) + (d^j_{n_p}-d^j_{n_q}) + (u_{n_p}-u_{n_q}) + (d^j_{n_q}+u_{n_q}-d_{n_q}).
			\end{aligned}
			\]
			Taking absolute values and using the triangle inequality gives
			\[
			\begin{aligned}
				|d_{n_p}-d_{n_q}|
				&\le |d_{n_p}-d^j_{n_p}-u_{n_p}| 
				+ |d^j_{n_p}-d^j_{n_q}|
				+ |u_{n_p}-u_{n_q}|
				+ |d^j_{n_q}+u_{n_q}-d_{n_q}|.
			\end{aligned}
			\]
			By construction, the first and last terms are both $<\eta/3$. 
			Since $(d^j_{n_k})_k$ converges, there exists $K_1$ such that $|d^j_{n_p}-d^j_{n_q}|<\eta/3$ whenever $p,q\ge K_1$. 
			Since $u\in c$, the sequence $(u_{n_k})_k$ converges, so there exists $K_2$ such that $|u_{n_p}-u_{n_q}|<\eta/3$ whenever $p,q\ge K_2$. 
			Therefore, for $p,q\ge \max\{K_1,K_2\}$,
			\[
			|d_{n_p}-d_{n_q}| < \eta/3 + \eta/3 + \eta/3 + \eta/3 = \tfrac{4}{3}\eta.
			\]
			As $\eta>0$ was arbitrary, this shows that $(d_{n_k})_k$ is Cauchy, hence convergent. 
			\[
			L(d):=\lim_{k\to\infty} d_{\,n_k}.
			\]
			Moreover, passing to the limit in $d_{\,n_k}=d^j_{\,n_k}+u_{\,n_k}+r_k$ with $|r_k|<\eta/3$ gives
			\[
			L(d)=L(d^j)+\ell(u),
			\]
			so $L:D\to\Bbb K$ is well defined and (by the argument above) independent of the choice of $j$ and $u$.
			As usual, the limit along $I^+$ and along $I^-$ also equals $L(d)$.
			
			\smallskip
			\emph{Step 2. Construction of $T$.}
			Partition $I$ into disjoint infinite blocks
			\[
			I=\bigsqcup_{j=1}^\infty (I_j^+\sqcup I_j^-).
			\]
			Let $(f_j)_{j=1}^\infty\subset S_{E^*}$ be dense. Define $T:E\to\ell^\infty$ by
			\[
			(Tx)_n :=
			\begin{cases}
				f_j(x), & n\in I_j^+,\\
				-\,f_j(x), & n\in I_j^-,\\
				0, & n\notin I.
			\end{cases}
			\]
			Then $\|Tx\|_\infty=\sup_j |f_j(x)|=\|x\|$, hence $T$ is an isometry.
			
			\smallskip
			\emph{Step 3. Separation from $D+c$.}
			If $Tx=d+u$ with $d\in D$, $u\in c$, then along $I_j^\pm$ we have
			\[
			\lim_{n\in I_j^+}(Tx)_n=f_j(x),\qquad \lim_{n\in I_j^-}(Tx)_n=-\,f_j(x),
			\]
			while, by Step~1,
			\[
			\lim_{n\in I_j^\pm} d_n=L(d),\qquad \lim_{n\in I_j^\pm} u_n=\ell(u).
			\]
			Hence $f_j(x)=L(d)+\ell(u)=-f_j(x)$ for all $j$, so $f_j(x)=0$ for all $j$, and therefore $x=0$.
			Thus $T(E)\cap(D+c)=\{0\}$.
			
			\smallskip
			\emph{Step 4. Conclusion.}
			Let $Z:=D+T(E)$. If $z\in Z\cap c$, write $z=d+Tx$. By Step~3, $x=0$, hence $z=d\in D\cap c=\{0\}$.
			Therefore $Z\cap c=\{0\}$.
		\end{proof}
			\medskip

		 \medskip
		 
		\noindent\textbf{Open question.} 
		In \autoref{prop:extension} we showed that, given a finite-dimensional subspace $D\subset (\linf\setminus \cseq)\cup \lbrace 0 \rbrace$ and a separable Banach space $E$, one can construct an isometry $T:E\to \linf$ and a separable closed subspace $Z\subset \linf$ containing $D$ such that
		\[
		T(E)\subset Z \quad \text{and} \quad Z\cap \cseq=\{0\}.
		\]
		
		A natural question is whether one can go further and obtain \emph{nonseparable} extensions:
		
		\begin{quote}
			Given $D\subset (\linf\setminus \cseq) \cup \lbrace 0 \rbrace$ finite-dimensional and $E$ an arbitrary separable Banach space, does there always exist an isometry $T:E\to \linf$ and a closed nonseparable subspace  $Z\subset \linf$ containing $D$ such that
			\[
			T(E) \subset Z, \qquad Z\cap \cseq=\{0\}?
			\]
		\end{quote}
		
\section{Extensions beyond the finite-dimensional case}\label{sec:extensions-beyond}
\medskip 

In \autoref{prop:extension} we established that if $D\subset \ell^\infty$ is finite-dimensional with $D\cap c=\{0\}$, then 
for every separable Banach space $E$ there exists an isometric embedding $T:E\to \ell^\infty$ such that 
\[
T(E)\cap(D+c)=\{0\}.
\]
Moreover, in this situation the extension $Z:=D+T(E)$ is closed, since the sum of a closed subspace with a finite-dimensional subspace is always closed.
In the framework of $(\alpha,\beta)$-structures (see \cite{PR, Raposo}), this already yields that $\ell^\infty\setminus c$ is $(r,\mathfrak c)$-\emph{spaceable} for every $r\in\mathbb N$.

\medskip

\noindent\textbf{Reminder: $(\alpha,\beta)$-lineability/spaceability.}
Let $A\subset V$ where $V$ is a (topological) vector space, and let $\alpha\le \beta$ be cardinals.
We say that:
\begin{itemize}
	\item $A$ is \emph{$(\alpha,\beta)$-lineable} if $A \cup \lbrace 0 \rbrace$ contains an $\alpha$-dimensional subspace and, moreover, for every $\alpha$-dimensional subspace $W_\alpha\subset A\cup\{0\}$ there exists a $\beta$-dimensional subspace $W_\beta$ with
	\[
	W_\alpha\subset W_\beta\subset A\cup\{0\}.
	\]
	\item $A$ is \emph{$(\alpha,\beta)$-spaceable} if, in addition, $V$ is topological and $W_\beta$ can be chosen closed.
\end{itemize}

\medskip

A natural question is whether the same extension property persists for larger classes of subspaces $D$. 
We address here two intermediate cases: (i) when $D$ has a countable Hamel basis, and (ii) when $D$ is separable (but possibly without a countable Hamel basis). 
In both situations we construct the required isometry $T$ and retain the separation property $T(E)\cap(D+c)=\{0\}$.

\begin{proposition}[Extension with countable-dimensional $D$]\label{prop:countable-dim}
	Let $D\subset \ell^\infty$ be a subspace with a countable Hamel basis $W=\{w^i\}_{i=1}^\infty$ and assume $D\cap c=\{0\}$.
	Let $E$ be a separable Banach space. Then there exists an isometry $T:E\to \ell^\infty$ such that
	\[
	T(E)\cap (D+c)=\{0\}.
	\]
	In particular, there exists $Z)$ such thata $D\subset Z$, $Z$ contains an isometric copy of $E$, and $Z\cap c=\{0\}$. 
	(Closedness of $Z$ is not guaranteed in general.)
\end{proposition}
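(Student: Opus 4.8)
The plan is to run the proof of Theorem~\ref{prop:extension} essentially verbatim, replacing the single application of Bolzano--Weierstrass (which handled the $r$ basis vectors at once) by a Cantor diagonal extraction that handles the countably many basis vectors $w^1, w^2, \dots$ simultaneously. Since each $w^i=(w^i_n)_n$ lies in $\ell^\infty$, every scalar sequence $(w^i_n)_n$ is bounded, so starting from $\mathbb N$ I would extract nested subsequences (the $m$-th refining the $(m-1)$-st so that $(w^m_n)_n$ converges along it) and then diagonalize to obtain one infinite set $I=\{n_j:j\in\mathbb N\}$ with $n_1<n_2<\cdots$ such that $w^i_{n_j}\xrightarrow{j\to\infty}\alpha_i$ for \emph{every} $i\in\mathbb N$. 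Because $W$ is a \emph{Hamel} basis, an arbitrary $d\in D$ is a finite combination $d=\sum_{i=1}^m a_i w^i$, whence $d_n\xrightarrow[n\in I]{}\langle a,\alpha\rangle:=\sum_{i=1}^m a_i\alpha_i=:L$; moreover this convergence persists along any infinite subset of $I$.

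From here the construction is identical to Theorem~\ref{prop:extension}: split $I=I^+\cup I^-$ into even- and odd-indexed terms $I^+=\{n_{2j}\}$, $I^-=\{n_{2j-1}\}$; fix bijections $\eta^\pm:\mathbb N\to I^\pm$; take a dense sequence $(u_k)\subset S_E$ and, by Hahn--Banach, norming functionals $\phi_k\in S_{E^*}$ with $\phi_k(u_k)=1$; and define $T(x):=(\varphi_n(x))_n$ with $\varphi_n:=\phi_{(\eta^+)^{-1}(n)}$ for $n\in I^+$, $\varphi_n:=-\phi_{(\eta^-)^{-1}(n)}$ for $n\in I^-$, and $\varphi_n:=0$ otherwise. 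Then $\|\varphi_n\|\le1$ gives $\|T(x)\|_\infty\le\|x\|$. For the reverse inequality, given $x\neq0$ set $v:=x/\|x\|$, choose $u_{k_j}\to v$, deduce $\phi_{k_j}(x)\to\|x\|$, and (passing to an increasing subsequence) note $n_j:=\eta^+(k_j)\in I^+$ satisfies $\varphi_{n_j}(x)=\phi_{k_j}(x)\to\|x\|$, so $\|T(x)\|_\infty\ge\|x\|$ and $T$ is an isometric embedding.

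For the separation property, suppose $x\neq0$ and $T(x)=d+y$ with $d=\sum_{i=1}^m a_i w^i\in D$, $y\in c$. Along $(n_j)\subset I^+$ we have $\varphi_{n_j}(x)\to\|x\|$ and $d_{n_j}\to L$, so $y_{n_j}\to\|x\|-L$; along $p_j:=\eta^-(k_j)\in I^-$ we have $\varphi_{p_j}(x)=-\phi_{k_j}(x)\to-\|x\|$ and $d_{p_j}\to L$, so $y_{p_j}\to-\|x\|-L$. Since $\|x\|>0$, these subsequential limits of $y$ differ, contradicting $y\in c$. Hence $T(E)\cap(D+c)=\{0\}$, and putting $Z:=D+T(E)$ yields $D\subset Z$, an isometric copy $T(E)\cong E$ inside $Z$, and $Z\cap c=\{0\}$: if $z=d+T(x)\in c$ then $T(x)=z-d\in D+c$, forcing $x=0$ and $z\in D\cap c=\{0\}$.

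The only genuinely new ingredient over the finite-dimensional case is the diagonal extraction, and the point to watch is that it delivers coordinatewise convergence $w^i_{n_j}\to\alpha_i$ only for each \emph{fixed} $i$, with no uniformity in $i$ — but this is exactly what is needed, since every vector of $D$ involves only finitely many basis elements. The price is that closedness of $Z=D+T(E)$ cannot be asserted: $D$ itself need not be closed, and the sum of a closed subspace with an infinite-dimensional non-closed subspace need not be closed, which is why the statement only claims $Z\cap c=\{0\}$.
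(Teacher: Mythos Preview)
Your proof is correct and follows essentially the same approach as the paper: replace the single Bolzano--Weierstrass step of Theorem~\ref{prop:extension} by a Cantor diagonal extraction so that every basis sequence $(w^i_n)_n$ converges along the common index set $I$, then copy the construction of $T$ and the separation argument verbatim. In fact you spell out more detail than the paper does, since the paper merely states the diagonal step and then refers back to Theorem~\ref{prop:extension} for the rest.
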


\begin{proof}
	
	For each $i\ge 1$ write $w^i=(w^i_n)_{n=1}^{\infty}\in \ell^\infty$. Each coordinate sequence $(w^i_n)_{n=1}^{\infty}$ is bounded.  
	By a diagonal argument, we construct a strictly increasing sequence $(n_j)_{j=1}^{\infty}$ such that
	\[
	\forall i\in\mathbb N:\quad \lim_{j\to\infty} w^i_{\,n_j}=: \alpha_i\in\Bbb K.
	\]
	Set $I:=\{n_j:j\in\mathbb N\}$ and split it into two infinite subsequences
	\[
	I^+:=\{n_{2j}:j\in\mathbb N\},\qquad I^-:=\{n_{2j-1}:j\in\mathbb N\}.
	\]
	Thus, for any $d=\sum_{i=1}^m a_i w^i\in D$, we have
	\[
	(d_{n_j})_{j=1}^{\infty}\ \longrightarrow\ \sum_{i=1}^m a_i \alpha_i=:L(d),
	\]
	so $d_{n_j}\to L(d)$ along both $I^+$ and $I^-$.  
	
	From here the argument is identical to the proof of Theorem~\ref{prop:extension}: we construct $T:E\to\ell^\infty$ using functionals $\pm\phi_k$ placed alternately on $I^\pm$, show that $T$ is an isometry, and use the limits $L(d)$ to deduce $T(E)\cap(D+c)=\{0\}$. The assertions about $Z:=D+T(E)$ follow immediately.
\end{proof}
\medskip

\noindent\emph{\textbf{$(\alpha,\beta)$-interpretation.}}
Since every countable-dimensional $D$ embeds as above, we obtain that $\ell^\infty\setminus c$ is $(\aleph_0,\mathfrak c)$-\emph{lineable}.

\begin{proposition}[Extension with separable $D$]\label{prop:separable-D}
	Let $D\subset \ell^\infty$ be a separable subspace with $D\cap c=\{0\}$, and let $E$ be a separable Banach space.
	Then there exists an isometry $T:E\to \ell^\infty$ such that
	\[
	T(E)\cap (D+c)=\{0\}.
	\]
	In particular, there exists $Z$ such that $D\subset Z$, $Z$ contains an isometric copy of $E$, and $Z\cap c=\{0\}$. 
	(Closedness of $Z$ is not guaranteed in general.)
\end{proposition}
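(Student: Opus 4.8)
The plan is to run the same construction as in Theorem~\ref{prop:extension} and Proposition~\ref{prop:countable-dim}, but since $D$ now has no countable Hamel basis at our disposal, I would base the diagonal argument on a countable \emph{dense} subset of $D$ and then propagate the resulting coordinatewise convergence to all of $D$, exploiting that the $\ell^\infty$-norm controls every coordinate uniformly.

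Concretely, I would first fix a countable dense set $\{d^i\}_{i\ge 1}\subset D$ and, since each coordinate sequence $(d^i_n)_n$ is bounded, extract by a diagonal argument a strictly increasing sequence $(n_j)$ along which $d^i_{n_j}\to\beta_i$ for every $i$. The key extra step is to show that then $(d_{n_j})_j$ converges for \emph{every} $d\in D$: given $\varepsilon>0$, choosing $i$ with $\|d-d^i\|_\infty<\varepsilon$ yields $|d_{n_j}-d_{n_k}|<2\varepsilon+|d^i_{n_j}-d^i_{n_k}|$, and the middle term is small for large $j,k$ because $(d^i_{n_j})_j$ converges; hence $(d_{n_j})_j$ is Cauchy in $\mathbb K$, with a limit I will call $L(d)$, and $L$ is linear as a pointwise limit of the linear maps $d\mapsto d_{n_j}$. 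After splitting $I:=\{n_j\}$ into $I^+:=\{n_{2j}\}$ and $I^-:=\{n_{2j-1}\}$, we get $d_n\to L(d)$ along both $I^+$ and $I^-$ for every $d\in D$, which is exactly the property used in the earlier proofs.

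From that point the argument is verbatim the proof of Theorem~\ref{prop:extension}: fix a dense $(u_k)\subset S_E$ and $\phi_k\in S_{E^*}$ with $\phi_k(u_k)=1$, place $+\phi_k$ on the $k$-th index of $I^+$ and $-\phi_k$ on the $k$-th index of $I^-$ (and $0$ off $I$), and set $T(x)=(\varphi_n(x))_n$. Density of $(u_k)$ gives $\phi_{k_j}(x)\to\|x\|$ for $x\ne 0$, so $T$ is an isometric embedding; and if $T(x)=d+y$ with $d\in D$, $y\in c$, then $(T(x)-d)_n$ tends to $\|x\|-L(d)$ along $I^+$ and to $-\|x\|-L(d)$ along $I^-$, two distinct values when $x\ne 0$, forcing $y\notin c$ — hence $T(E)\cap(D+c)=\{0\}$. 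Taking $Z:=D+T(E)$ then gives $D\subset Z$, an isometric copy of $E$ inside $Z$, and $Z\cap c=\{0\}$ by the same one-line computation as in Theorem~\ref{prop:extension}; I would only flag that, $D$ being merely separable, $Z$ need not be closed, which is why closedness is omitted from the statement.

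The one genuinely new point — and the only place any care is needed — is the passage from the dense subset to all of $D$ in the second paragraph; it works precisely because convergence in $\ell^\infty$ is uniform over coordinates, which is what the three-term estimate exploits. Everything else is a line-by-line transcription of the proofs of Theorem~\ref{prop:extension} and Proposition~\ref{prop:countable-dim}, so I would present it in that abbreviated form rather than repeating the details.
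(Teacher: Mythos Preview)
Your proposal is correct and follows essentially the same route as the paper: diagonalize along a countable dense subset of $D$ to get a common convergent subsequence of indices, propagate convergence to all of $D$ via a Cauchy/$3\varepsilon$ argument using the $\ell^\infty$-norm, then repeat the $\pm\phi_k$ construction from Theorem~\ref{prop:extension} verbatim. If anything, your write-up of the extension step (the three-term estimate) is tidier than the paper's, which conflates the index of the dense sequence with the subsequence index.
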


\begin{proof}
	Let $(v^j)_{j=1}^{\infty}\subset D$ be dense in $\|\cdot\|_\infty$. 
	By a diagonal argument, there exists a strictly increasing sequence $(n_j)_{j=1}^{\infty}$ such that
	\[
	\forall\, j\in\mathbb N:\quad (v^j_{\,n_k})_{k=1}^{\infty}\ \text{converges in }\Bbb K.
	\]
	Set $I:=\{n_j:j\in\mathbb N\}$ and split it into two infinite subsets
	\[
	I^+:=\{n_{2j}:j\in\mathbb N\},\qquad I^-:=\{n_{2j-1}:j\in\mathbb N\}.
	\]
	
	\emph{Claim.} For every $d=(d_{\,n})_{n=1}^{\infty}\in D$, the sequence $(d_{\,n_j})_{j=1}^{\infty}$ converges (hence has the same limit along $I^+$ and $I^-$).
	
	\emph{Proof of the claim.}
	Fix $d=(d_{\,n})_{n=1}^{\infty}\in D$ and let $\eta>0$. By density of $(v^j)_{j=1}^{\infty}$ in $D$ for the sup norm, choose $m\in\mathbb N$ such that
	\[
	\|d-v^m\|_\infty<\eta/3.
	\]
	By the definition of the sup norm, this gives the \emph{uniform} estimate
	\[
	|d_{\,n_k}-v^m_{\,n_k}|\le \|d-v^m\|_\infty<\eta/3\qquad\text{for all }k\in\mathbb N.
	\]
	Since $(v^m_{\,n_k})_{k}$ converges, it is Cauchy; hence there exists $K\in\mathbb N$ such that, for all $p,q\ge K$,
	\[
	|v^m_{\,n_p}-v^m_{\,n_q}|<\eta/3.
	\]
	Then, for $p,q\ge K$, the triangle inequality yields
	\[
	\begin{aligned}
		|d_{\,n_p}-d_{\,n_q}|
		&\le |d_{\,n_p}-v^m_{\,n_p}| + |v^m_{\,n_p}-v^m_{\,n_q}| + |v^m_{\,n_q}-d_{\,n_q}|\\
		&< \eta/3+\eta/3+\eta/3=\eta.
	\end{aligned}
	\]
	Thus $(d_{\,n_j})_{j}$ is Cauchy and, since $\Bbb K$ is complete, it converges. 
	Denote its limit by
	\[
	L(d):=\lim_{j\to\infty} d_{\,n_j}.
	\]
	As a convergent sequence has all subsequences converging to the same limit, the limits of $(d_{\,n_j})$ along $I^+$ and along $I^-$ both equal $L(d)$. \qedhere
	
	\smallskip
	From this point on, the proof is identical to that of \autoref{prop:extension}: 
	we place the functionals $\pm f_j$ alternately on $I^\pm$ to define $T:E\to\ell^\infty$, 
	verify that $T$ is an isometry, and use the limits $L(d)$ to conclude that 
	$T(E)\cap(D+c)=\{0\}$. The statements about $Z:=D+T(E)$ follow immediately.
\end{proof}
\medskip

\noindent\emph{\textbf{$(\alpha,\beta)$-interpretation.}}
If $\alpha=\dim D$ (possibly infinite), the result shows that $\ell^\infty\setminus c$ is $(\alpha,\mathfrak c)$-\emph{lineable}.

\medskip

The arguments given so far rely on the existence of a subsequence $(n_j)$ along which 
every element of $D$ has a limit. In the finite-dimensional or separable case, 
this was ensured by a diagonal argument applied to a countable dense set. 
However, for a general subspace $D\subset \ell^\infty$ with $D\cap c=\{0\}$, 
such a construction cannot be carried out unless one imposes additional hypotheses. 
A natural and sufficient condition is that $D$ be \emph{separable modulo $c$}, 
that is, that the image $q(D)$ of $D$ under the quotient map $q:\ell^\infty\to\ell^\infty/c$ 
be separable. This assumption exactly captures what is needed in Step~1 of the proof, 
namely, the existence of a countable dense family of representatives in $D$ 
modulo $c$ to which the diagonal procedure can be applied.
\medskip

We now state the general extension result under this hypothesis.
\medskip

\begin{theorem}[Extension with $D$ transversal to $c$ and separable modulo $c$]\label{thm:general-D}
	Let $D\subset \ell^\infty$ satisfy $D\cap c=\{0\}$, and let $q:\ell^\infty\to \ell^\infty/c$ be the quotient map.
	Assume $q(D)$ is separable in $\ell^\infty/c$ (i.e., $D$ is separable modulo $c$).
	Then for every separable Banach space $E$ there exists an isometry $T:E\to\ell^\infty$ such that
	\[
	T(E)\cap (D+c)=\{0\}.
	\]
	In particular, for $Z:=D+T(E)$ we have $Z\cap c=\{0\}$. 
\end{theorem}

\begin{proof}
	
	Since $q(D)$ is separable, choose a dense sequence $(y^j)_{j=1}^\infty\subset q(D)$.
	Pick representatives $d^j\in D$ with $q(d^j)=y^j$.
	By a diagonal argument, there is a strictly increasing sequence $(n_k)_{k=1}^\infty$ such that
	$(d^j_{\,n_k})_k$ converges in $\Bbb K$ for every $j$.
	Define $I:=\{n_k:k\in\Bbb N\}$ and split $I$ into two infinite subsequences
	\[
	I^+:=\{n_{2k}:k\in\Bbb N\},\qquad I^-:=\{n_{2k-1}:k\in\Bbb N\}.
	\]
	Set $L(d^j):=\lim_{k\to\infty} d^j_{\,n_k}$.

	For $d\in D$ and $\eta>0$, choose $j$ with $\|q(d)-q(d^j)\|_{\ell^\infty/c}<\eta/3$. 
	Then there exists $u\in c$ such that
	\[
	\|(d-d^j)-u\|_\infty<\eta/3,
	\]
	that is,
	\[
	|d_n-d^j_n-u_n|<\eta/3 \qquad \text{for all }n\in\mathbb N.
	\]
	Fix $p,q\in\mathbb N$. We insert and subtract $d^j$ and $u$ at both ends:
	\[
	\begin{aligned}
		d_{n_p}-d_{n_q}
		&= (d_{n_p}-d^j_{n_p}-u_{n_p}) + (d^j_{n_p}-d^j_{n_q}) + (u_{n_p}-u_{n_q}) + (d^j_{n_q}+u_{n_q}-d_{n_q}).
	\end{aligned}
	\]
	Taking absolute values and using the triangle inequality gives
	\[
	\begin{aligned}
		|d_{n_p}-d_{n_q}|
		&\le |d_{n_p}-d^j_{n_p}-u_{n_p}| 
		+ |d^j_{n_p}-d^j_{n_q}|
		+ |u_{n_p}-u_{n_q}|
		+ |d^j_{n_q}+u_{n_q}-d_{n_q}|.
	\end{aligned}
	\]
	By construction, the first and last terms are both $<\eta/3$. 
	Since $(d^j_{n_k})_k$ converges, there exists $K_1$ such that $|d^j_{n_p}-d^j_{n_q}|<\eta/3$ whenever $p,q\ge K_1$. 
	Since $u\in c$, the sequence $(u_{n_k})_k$ converges, so there exists $K_2$ such that $|u_{n_p}-u_{n_q}|<\eta/3$ whenever $p,q\ge K_2$. 
	Therefore, for $p,q\ge \max\{K_1,K_2\}$,
	\[
	|d_{n_p}-d_{n_q}| < \eta/3 + \eta/3 + \eta/3 + \eta/3 = \tfrac{4}{3}\eta.
	\]
	As $\eta>0$ was arbitrary, this shows that $(d_{n_k})_k$ is Cauchy, hence convergent. 
	\[
	L(d):=\lim_{k\to\infty} d_{\,n_k}.
	\]
	Moreover, passing to the limit in $d_{\,n_k}=d^j_{\,n_k}+u_{\,n_k}+r_k$ with $|r_k|<\eta/3$ gives
	\[
	L(d)=L(d^j)+\ell(u),
	\]
	so $L:D\to\Bbb K$ is well defined and (by the argument above) independent of the choice of $j$ and $u$.
	As usual, the limit along $I^+$ and along $I^-$ also equals $L(d)$.
	
	\smallskip
	
	Partition $I$ into disjoint infinite blocks
	\[
	I=\bigsqcup_{j=1}^\infty (I_j^+\sqcup I_j^-).
	\]
	Let $(f_j)_{j=1}^\infty\subset S_{E^*}$ be dense. Define $T:E\to\ell^\infty$ by
	\[
	(Tx)_n :=
	\begin{cases}
		f_j(x), & n\in I_j^+,\\
		-\,f_j(x), & n\in I_j^-,\\
		0, & n\notin I.
	\end{cases}
	\]
	Then $\|Tx\|_\infty=\sup_j |f_j(x)|=\|x\|$, hence $T$ is an isometry.

	If $Tx=d+u$ with $d\in D$, $u\in c$, then along $I_j^\pm$ we have
	\[
	\lim_{n\in I_j^+}(Tx)_n=f_j(x),\qquad \lim_{n\in I_j^-}(Tx)_n=-\,f_j(x),
	\]
	while, by Step~1,
	\[
	\lim_{n\in I_j^\pm} d_n=L(d),\qquad \lim_{n\in I_j^\pm} u_n=\ell(u).
	\]
	Hence $f_j(x)=L(d)+\ell(u)=-f_j(x)$ for all $j$, so $f_j(x)=0$ for all $j$, and therefore $x=0$.
	Thus $T(E)\cap(D+c)=\{0\}$.
	
\smallskip
	\emph{Conclusion.}
	Let $Z:=D+T(E)$. If $z\in Z\cap c$, write $z=d+Tx$. By Step~3, $x=0$, hence $z=d\in D\cap c=\{0\}$.
	Therefore $Z\cap c=\{0\}$.
\end{proof}

\medskip

\noindent\textbf{Open question.}
Let $D\subset \ell^\infty$ be an arbitrary (possibly nonseparable) subspace with $D\cap c=\{0\}$, and let $E$ be a Banach separable.
Does there always exist an isometry $T:E\to \ell^\infty$ such that $T(E)\cap(D+c)=\{0\}$?

	\section*{Acknowledgments}
	The author is grateful to F. Costa Jr.\ and A.\ Raposo Jr.\ for carefully reading parts of the manuscript and for helpful comments on some proofs. 
	The author was supported by CAPES --- Coordena\c{c}\~ao de Aperfei\c{c}oamento de Pessoal de N\'ivel Superior (Brazil) --- through a postdoctoral fellowship at IMECC, Universidade Estadual de Campinas (PIPD/CAPES).

	\bigskip
	\noindent\textbf{2020 MSC:} 46B04, 46B20.  
	\textbf{Keywords:} isometric embeddings; separable Banach spaces; $\ell^\infty$; convergent sequences; lineability; spaceability.


\begin{thebibliography}{99}
	\bibitem{ABRR-2025}\href{https://doi.org/10.1112/mtk.70006}{G. Ara\'{u}jo, A. Barbosa, A. Raposo Jr., G. Ribeiro, \textit{Complements of unions: Insights on spaceability and applications}. Mathematika \textbf{71} (2025), e70006.}
	
	
	\bibitem{Araujo}
	\href{https://doi.org/10.1007/s13398-023-01505-8}{G. Araújo, A. Barbosa,
		\textit{A general lineability criterion for complements of vector spaces},
		Rev. Real Acad. Cienc. Exactas Fis. Nat. Ser. A-Mat. \textbf{118} (2024), no.~5.}
	
	\bibitem{ABRR}
	\href{https://doi.org/10.1007/s00574-023-00360-w}{G. Araújo, A. Barbosa, A. Raposo Jr., G. Ribeiro,
		\textit{On the spaceability of the set of functions in the Lebesgue space $L_p$ which are not in $L_q$},
		Bull. Braz. Math. Soc. (N.S.) \textbf{54} (2023), Paper no.~44, 9pp.}
	
	\bibitem{AGPS}
	\href{https://doi.org/10.1201/b19277}{R.M. Aron, L. Bernal-González, D. Pellegrino, J.B. Seoane-Sepúlveda,
		\textit{Lineability: The Search for Linearity in Mathematics},
		Monographs and Research Notes in Mathematics, CRC Press, Boca Raton (2016).}
	
	\bibitem{AGS}
	\href{https://doi.org/10.1090/S0002-9939-04-07533-1}{R.M. Aron, V.I. Gurariy, J.B. Seoane-Sepúlveda,
		\textit{Lineability and spaceability of sets of functions on $\mathbb{R}$},
		Proc. Amer. Math. Soc. \textbf{133} (2005), 795--803.}
	
	\bibitem{Banach1932}
	\href{https://doi.org/10.1007/BF01708920}{S. Banach,
		\textit{Théorie des opérations linéaires},
		Monografie Matematyczne, vol.~1, Warszawa, 1932. Reviewed by H. Hahn in Monatshefte für Mathematik und Physik (1933).}
	
	\bibitem{BessagaPelczynski1975}
	\href{https://www.worldcat.org/oclc/1853495}{C. Bessaga, A. Pełczyński,
		\textit{Selected Topics in Infinite-Dimensional Topology},
		Monografie Matematyczne, vol.~58, PWN, Warsaw (1975).}
	
	\bibitem{bernaljfa}
	\href{https://doi.org/10.1016/j.jfa.2013.11.014}{L. Bernal-González, M. Ordóñez-Cabrera,
		\textit{Lineability criteria, with applications},
		J. Funct. Anal. \textbf{266} (2014), 3997--4025.}
	
	\bibitem{MMB}
	\href{https://doi.org/10.1007/s13398-023-01525-4}{M.C. Calderón-Moreno, P.J. Gerlach-Mena, J.A. Prado-Bassas,
		\textit{Infinite pointwise lineability: general criteria and applications},
		RACSAM \textbf{118} (2024), no.~1, Paper No.~25, 13pp.}
	
	\bibitem{DR}
	\href{https://doi.org/10.1007/s00574-021-00246-9}{D. Diniz, A. Raposo Jr.,
		\textit{A note on the geometry of certain classes of linear operators},
		Bull. Braz. Math. Soc. (N.S.) \textbf{52} (2021), 1073--1080.}
	
	\bibitem{FPT}
	\href{https://doi.org/10.1007/s00574-019-00142-3}{V.V. Fávaro, D. Pellegrino, D. Tomaz,
		\textit{Lineability and spaceability: a new approach},
		Bull. Braz. Math. Soc. (N.S.) \textbf{51} (2020), 27--46.}
	
	\bibitem{Raposo}
	\href{https://doi.org/10.1090/proc/16608}{V.V. Fávaro, D. Pellegrino, A.B. Raposo Jr., G.S. Ribeiro,
		\textit{General criteria for a stronger notion of lineability},
		Proc. Amer. Math. Soc. \textbf{152} (2024), no.~3, 941--954.}
	
	\bibitem{FonfGurariyKadets1999}
	\href{https://zbmath.org/0955.46017}{V.P. Fonf, V.I. Gurariy, M.I. Kadets,
		\textit{An infinite dimensional subspace of $C[0,1]$ consisting of nowhere differentiable functions},
		C. R. Acad. Bulgare Sci. \textbf{52} (1999), no.~11--12, 13--16. MR1735456; Zbl 0955.46017.}
	
	\bibitem{Gurariy}
	\href{https://www.mathnet.ru/eng/dan32206}{V.I. Gurariy,
		\textit{Subspaces and bases in spaces of continuous functions},
		Dokl. Akad. Nauk SSSR \textbf{167} (1966), 971--973 (Russian).}
	
	\bibitem{Hencl2000}
	\href{https://doi.org/10.1090/S0002-9939-00-05595-7}{S. Hencl,
		\textit{Isometrical embeddings of separable Banach spaces into the set of nowhere approximatively differentiable and nowhere Hölder functions},
		Proc. Amer. Math. Soc. \textbf{128} (2000), no.~12, 3505--3511.}
	
	\bibitem{Kalton1975}
	\href{https://doi.org/10.1017/S0013091500010282}{N.J. Kalton,
		\textit{Basic sequences in F-spaces and their applications},
		Proc. Edinburgh Math. Soc. (2) \textbf{19} (1974), no.~2, 151--167.}
	
	\bibitem{KleiberPervin1969}
	\href{https://doi.org/10.1017/S0004972700041411}{M. Kleiber, W.J. Pervin,
		\textit{A generalized Banach--Mazur theorem},
		Bull. Austral. Math. Soc. \textbf{1} (1969), no.~2, 169--173.}
	
	\bibitem{Leo}
	\href{https://doi.org/10.1112/blms.12858}{P. Leonetti, T. Russo, J. Somaglia,
		\textit{Dense lineability and spaceability in certain subsets of $\ell_\infty$},
		Bull. Lond. Math. Soc. \textbf{55} (2023), 1--21.}
	
	\bibitem{Lindenstrauss1967}
	\href{https://zbmath.org/0182.12302}{J. Lindenstrauss,
		\textit{On a theorem of Murray and Mackey},
		An. Acad. Brasil. Ci. \textbf{39} (1967), 1--6. MR0216355; Zbl 0182.12302.}
	
	\bibitem{Papathanasiou2022}
	\href{https://doi.org/10.1090/proc/15758}{D. Papathanasiou,
		\textit{Dense lineability and algebrability of $\ell_\infty \setminus c_0$},
		Proc. Amer. Math. Soc. \textbf{150} (2022), no.~3, 991--996.}
	
	\bibitem{PR}
	\href{https://doi.org/10.1016/j.indag.2020.12.006}{D. Pellegrino, A. Raposo Jr.,
		\textit{Pointwise lineability in sequence spaces},
		Indag. Math. (N.S.) \textbf{32} (2021), 536--546.}
	
	\bibitem{RSR}
	\href{https://doi.org/10.1007/s00574-023-00359-3}{A. Raposo Jr., D. Serrano-Rodríguez,
		\textit{Coefficients of Multilinear Forms on Sequence Spaces},
		Bull. Braz. Math. Soc. (N.S.) \textbf{54} (2023), Paper no.~43, 21pp.}
	
	\bibitem{Piazza}
	\href{https://doi.org/10.1090/S0002-9939-1995-1328375-8}{L. Rodríguez-Piazza,
		\textit{Every separable Banach space is isometric to a space of continuous nowhere differentiable functions},
		Proc. Amer. Math. Soc. \textbf{123} (1995), no.~12, 3649--3654.}
	
	\bibitem{Rosenthal1968}
	\href{https://doi.org/10.1073/pnas.59.2.361}{H.P. Rosenthal,
		\textit{On quasi-complemented subspaces of Banach spaces},
		Proc. Nat. Acad. Sci. U.S.A. \textbf{59} (1968), 361--364.}
	
	\bibitem{Wilansky1975}
	\href{https://doi.org/10.1007/BF01214402}{A. Wilansky,
		\textit{Semi-Fredholm maps of FK spaces},
		Math. Z. \textbf{144} (1975), 9--12.}
	
\end{thebibliography}
\end{document}